\newcommand{\R}{\mathbb{R}}
\renewcommand{\S}{\mathcal{S}}
\newcommand{\C}{\mathbb{C}}
\newcommand{\X}{\underline{X}}
\newcommand{\G}{\mathbb{G}}
\newcommand{\HH}{\mathbb{H}}
\newcommand{\HW}{\mathcal{H}\hspace{-0.55mm}\mathcal{W}}
\renewcommand{\leq}{\leqslant}
\renewcommand{\Re}{\operatorname{Re}}
\renewcommand{\Im}{\operatorname{Im}}
\patchcmd{\@setaddresses}{\indent}{\noindent}{}{}
\patchcmd{\@setaddresses}{\indent}{\noindent}{}{}
\patchcmd{\@setaddresses}{\indent}{\noindent}{}{}
\theoremstyle{definition}
\newtheorem{theorem}{Theorem}[section]
\newtheorem{proposition}[theorem]{Proposition}
\newtheorem{corollary}[theorem]{Corollary}
\newtheorem{lemma}[theorem]{Lemma}
\newtheorem{question}[theorem]{Question}
\newtheorem{definition}[theorem]{Definition}
\newtheorem{example}[theorem]{Example}
\newtheorem{remark}[theorem]{Remark}
\DeclareMathOperator{\conv}{conv}
\DeclareFontFamily{U}{mathx}{\hyphenchar\font45}
\DeclareFontShape{U}{mathx}{m}{n}{
      <5> <6> <7> <8> <9> <10>
      <10.95> <12> <14.4> <17.28> <20.74> <24.88>
      mathx10
      }{}
\DeclareSymbolFont{mathx}{U}{mathx}{m}{n}
\DeclareMathAccent{\widecheck}{0}{mathx}{"71}
\title{Slices of Stable Polynomials and Connections to the Grace-Walsh-Szeg\H{o} Theorem}
\author{Sebastian Debus}
\address{Sebastian Debus, Technische Universität Chemnitz, Fakultät für Mathematik, 09107 Chemnitz, Germany}
\email{sebastian.debus@mathematik.tu-chemnitz.de}
\author{Cordian Riener}
\address{Cordian Riener, Department of Mathematics and Statistics, UiT - The Arctic University of Norway, 9037 Troms\o, Norway}
\email{cordian.riener@uit.no}
\author{Robin Schabert}
\address{Robin Schabert, Fachbereich Mathematik und Statistik, Universität Konstanz, 78457 Konstanz, Germany}
\email{robin.schabert@uni-konstanz.de}
\date{\scriptsize{\today}}
\thanks{\scriptsize{Cordian Riener and Robin Schabert were supported by Tromsø Research Foundation under the grant agreement 17matteCR (SymRAG)}}
\begin{document}
\setlength{\parindent}{0pt}

\begin{abstract} 
Univariate polynomials are called stable with respect to a domain $D$ if all of their roots lie in $D$. 
We study linear slices of the space of stable univariate polynomials with respect to a half-plane.
We show that a linear slice always contains a stable polynomial with only a few distinct roots.
Subsequently, we apply these results to symmetric polynomials and varieties. 
We show that for varieties defined by few multiaffine symmetric polynomials, the existence of a point in $D^n$ with few distinct coordinates is necessary and sufficient for the intersection with $D^n$ to be non-empty.
This is at the same time a generalization of the so-called degree principle to stable polynomials and a result similar to Grace-Walsh-Szeg\H{o}'s coincidence theorem.
\end{abstract}

\maketitle

\section*{Introduction}

The study of univariate polynomials whose roots lie in prescribed regions of the complex plane is a classical and central topic in mathematics, with deep connections to algebra, analysis, combinatorics, and control theory. A particularly important case is that of \emph{hyperbolic polynomials}, i.e., real polynomials whose roots are all real. More generally, one considers \emph{$D$-stable polynomials} - those polynomials whose roots are contained in a designated domain, often a \emph{circular region} $D \subset \C$, meaning a region bounded by a circle or a line, possibly open or closed.

A case of central interest in this work is when $D$ is a half-plane. For instance, a real polynomial is hyperbolic precisely when it is stable with respect to the closed upper half-plane, due to the symmetry of complex conjugate roots. Classical and widely studied examples of such stable polynomials include \emph{Hurwitz stable polynomials}, whose roots lie in the open left half-plane and are fundamental in control theory.

Beyond their classical importance, stable polynomials also feature prominently in modern developments in combinatorics and theoretical computer science, for example, in the theory of real stable polynomials and their applications to negative dependence and log-concavity (see, e.g., \cite{branden2007polynomials,fisk2008polynomials}).

A powerful perspective arises when viewing these polynomials via their roots, encoded through \emph{Vieta's formula}. A monic univariate polynomial of degree $n$ with roots $x_1,\dots,x_n$ can be written as
\[
f_z(T) = T^n - z_1 T^{n-1} + z_2 T^{n-2} - \ldots + (-1)^n z_n,
\]
where $z_i = e_i(x)$ is the $i$-th elementary symmetric polynomial in the roots. This identifies the space of monic polynomials with $\C^n$, via the \emph{Vieta map} from $\C^n$ to symmetric functions of roots. In this setting, the set of hyperbolic polynomials corresponds to the image of $\R^n$ under the Vieta map, while $D$-stable polynomials arise as the image of $D^n$.

Within this framework, we investigate \emph{slices} of the set of $D$-stable polynomials, i.e., subsets defined by fixing linear combinations of coefficients. For hyperbolic polynomials, such slices, often called \emph{hyperbolic slices} or \emph{Vandermonde varieties}, have been extensively studied starting from Arnold’s seminal work \cite{Arnold,Givental,Kostov,Meguerditchian} up to recent contributions \cite{Schabert-Lien,riener2012degree}. These slices reveal rich geometric and combinatorial structures.

These \emph{stable slices} arise as natural geometric sections of the space of $D$-stable polynomials and offer a powerful lens through which to study the interplay between algebraic constraints and root configurations. By fixing linear relations among coefficients, one obtains linear subspaces intersecting the semialgebraic set of stable polynomials, giving rise to highly structured and tractable subsets. The geometry of these slices reflects a rich \emph{stratification by root multiplicities and symmetries}, and our results reveal a striking \emph{sparsity phenomenon}: the number of non-real and distinct real roots in local extreme points is tightly controlled by the slice's codimension. This geometric perspective generalizes classical constructions such as \emph{Vandermonde varieties} and \emph{hyperbolic amoebas}, and connects naturally to applications in real algebraic geometry, \emph{stability analysis in control theory}, and \emph{optimization over structured polynomial spaces}, such as those arising in signal processing and spectral graph theory. By identifying low-complexity representatives within slices, we also facilitate reductions of stability verification problems to simpler, lower-dimensional cases.

In this paper, we generalize these ideas from hyperbolic to \emph{upper half-plane stable} polynomials. We study linear slices of the set of stable polynomials defined by $k$ linear constraints on coefficients, and show in Theorem~\ref{maintheorem} that the local extreme points of such stable slices have at most $k$ non-real roots and at most $2k$ distinct real roots. This extends classical sparsity results into the complex setting and provides new tools for analyzing the structure of stable polynomials under linear constraints.

A key motivation stems from connections to the \emph{Grace-Walsh-Szeg\H{o} coincidence theorem}, a classical result stating that for symmetric multiaffine polynomials evaluated over a circular region $D$, any tuple in $D^n$ can be replaced by a constant tuple $(\zeta, \dots, \zeta)$ without changing the evaluation. While powerful, this result relies on strong symmetry assumptions. Recent work by Bränd\'en and Wagner \cite{Bränden-Wagner} shows such results generally fail for polynomials invariant under proper subgroups of $S_n$.

We show that for certain multivariate polynomials built from a small number of symmetric multiaffine building blocks, a similar ``coincidence'' result \emph{does} hold when $D$ is a half-plane. Specifically, we prove (Theorem~\ref{thm:grace}, Proposition~\ref{prop:main2}) that for any point in $D^n$, the evaluation of such functions agrees with that at a point with few distinct coordinates. In this way, stable slices provide a natural setting to generalize classical theorems under weaker assumptions.

We also introduce and prove a \emph{double-degree principle} for varieties of symmetric polynomials (Theorem~\ref{degree-principle}) and a \emph{half-degree principle} specific to upper half-plane stability (Theorem~\ref{thm:half-degree}), extending classical ideas about root sparsity to this broader context.

\subsection*{Relation to previous work}
As stated before, the ideas of studying slices of hyperbolic polynomials go back to the work of Arnold and his school. This paper builds on the ideas and techniques developed in \cite{riener2012degree, Riener-Schabert}, where the authors studied hyperbolic slices and positivity conditions for symmetric multivariate polynomials. 
In those works, the focus was on hyperbolic polynomials, whose roots lie on the real axis, and the results relied heavily on the geometry of real-rooted polynomials. 

In contrast, the present paper studies stable slices, where the roots are constrained to lie within (closed) half-planes. 
The shift from hyperbolicity to stability introduces fundamentally different geometric behavior, particularly concerning the interaction between interior and boundary points of the domain. 
While some structural methods from \cite{riener2012degree, Riener-Schabert} can be adapted quite directly, the main results here --- such as the connection to the Grace--Walsh--Szeg\H{o} theorem (Theorem 2.3) and the double-degree principle (Theorem 2.7) do not follow directly from the earlier hyperbolic case.

\section{slices of $D$-stable polynomials} \label{sec:Stable slices}
Throughout the article we denote by $\C[T]$ and $\R[T]$ the rings of univariate complex and real polynomials and fix positive integers $k \leq n$. Throughout, we identify \( \mathbb{C}^n \) with the real vector space \( \mathbb{R}^{2n} \) via the mapping
\[
(x_1 + i y_1, \ldots, x_n + i y_n) \longmapsto (x_1, y_1, \ldots, x_n, y_n),
\]
and all notions of convexity, convex combinations, and extreme points are considered with respect to this real structure.

For a complex number $z$ we write $\Re(z)$ and $\Im(z)$ for its real and imaginary parts. Furthermore, we commonly identify the set of monic univariate polynomials with $\C^n$ via the bijection
\[(z_1,\dots,z_n)\longmapsto f_z(T):=T^n-z_1T^{n-1}+z_2T^{n-2}-\dots + (-1)^n z_n.\]
In this section we are interested in polynomials whose roots lie in a prescribed domain $D\subset\C$.

\begin{definition}[\(D\)-stable polynomials and slices]\label{def:slices}
Let \(D \subset \C\) be a subset. A polynomial \(f \in \C[T]\) (or $f\in\R[T])$  of degree \(n\) is called \emph{\(D\)-stable} (or \emph{real \(D\)-stable}) if all of its roots lie in \(D\). The set of all such monic polynomials is denoted by
\[
\mathcal{S}_D(\mathbb{F}) := \left\{ z \in \mathbb{F}^n ~:~ f_z(T) = T^n - z_1 T^{n-1} + \dots + (-1)^n z_n \text{ has all roots in } D \right\},  
\] where $\mathbb{F}$ is either $\C$ or $\R$.
For a surjective linear map \(L : \C^n \to \C^k\) (or $M : \R^n \to \R^k$) and \(a \in \C^k\) (or $b \in \R^k$), the sets
\begin{align*}
 \mathcal{S}_D(\C) \cap L^{-1}(a), & &   \mathcal{S}_D(\R) \cap M^{-1}(b)   
\end{align*}
are called a \emph{slice of \(D\)-(real) stable polynomials}, or simply a (real) \emph{\(D\)-stable slice}. 
\end{definition}
A $D$-stable slice is thus the set of monic (real) $D$-stable polynomials whose coefficients satisfy $k \leq n$ linearly independent relations.  
\begin{remark}[Common domains \(D\)]
Several well-known classes of polynomials arise by choosing specific domains \(D \subset \C\):
\begin{enumerate}
    \item \textbf{Hyperbolic polynomials:} \(D = \R\). Then \(\mathcal{S}_\R(\R) = \mathcal{S}_\R(\C)\) is the set of monic polynomials with only real roots.
    \item \textbf{Hurwitz polynomials:} If \(D := \{ z \in \C : \Re(z) \leq 0 \}\) then the real $D$-stable polynomials  whose roots lie in the interior of $D$ are called \emph{Hurwitz polynomials} and the real $D$-stable polynomials are called \emph{weakly Hurwitz}. 
    We write \(\HW\) instead of \(\S_{D}(\R)\) and call a stable slice $\HW \cap M^{-1}(b)$ a \emph{Hurwitz slice}.
    \item \textbf{Schur-stable polynomials:} \(D = \{ z \in \C : |z| < 1 \}\). These are polynomials whose roots lie in the open unit disk. Hurwitz and Schur-stable polynomials are used in control theory and complex analysis since they guarantee that all solutions of the associated systems of
differential and difference equation converge to $0$.
        \item \textbf{Upper half-plane stable polynomials:} For \(D = \HH_+ := \{ z \in \C : \Im(z) \geq 0 \}\) the closed upper half-plane we write \(\S\) instead of \(\S_{\HH_+}(\C)\).
    \item \textbf{General domains:} For applications, \(D\) can also be a strip, sector, cone, or polygonal region, depending on the stability conditions relevant to the problem.
\end{enumerate}
\end{remark}

In the setting of hyperbolic polynomials, it had been observed that the corresponding hyperbolic slices exhibit the following very interesting properties: Given a hyperbolic slice defined by $k$ linear conditions. Then every affine linear function achieves its minimum or maximum over the slice at a polynomial with at most $k$ distinct roots (\cite[Theorem 2.8]{Riener-Schabert}). Since the set of such polynomials inside a hyperbolic slice is finite, this implies that the convex hull of the slice is a polytope (see Figure \ref{fig:hyperbolic} for an example of such a slice).
This property is a consequence of the strong concavity of the discriminant variety around the hyperbolic polynomials, and besides its geometrical consequences, it also has algorithmic implications leading to algorithmic simplifications, as, for example, shown in \cite{basu2022vandermonde,riener2025deciding}. \\
The focus here is to investigate \(\HH\)-\emph{stability}, where \(\HH\) is a half-plane. More specifically, we are interested in intersections of the set of stable polynomials with subspaces of $\C^n$. As multiplication with units in $\C$ does not change the roots of a polynomial, we restrict to monic stable polynomials. Given the results on hyperbolic slices, it is natural to wonder, whether this property generalizes to more general $D$-stable slices. As can be seen, for example, in Figure \ref{fig:slice1}, these slices do no longer possess the same strong concavity of the boundary and their convex hulls are not spanned by finitely many points, in general. However, we show below that also in this setup the convex hull is spanned by polynomials with restricted root multiplicities.

\begin{example}\label{ex:stable-slice}
We consider $\S \cap L^{-1}(a)$, where
\[a:=(-23 i, -463, 8461 i) \quad \text{and} \quad L : {\C^4} \to {\C^3}, {(z_1,z_2,z_3,z_4)} \mapsto {(z_1,z_2,z_3)}\]
is the projection to the first $3$ coordinates. Then $\S \cap L^{-1}(a)$ is non-empty, since \[(-23 i, -463, 8461 i, 8020)\in \S \cap L^{-1}(a).\] The coefficient vector corresponds to a polynomial with roots $-20+i,i,20+i$ and $20i$. Furthermore, $\S \cap L^{-1}(a)$ contains no point corresponding to a polynomial with at most $3$ distinct roots.

\begin{figure}[h]
    \centering
    \begin{subfigure}[b]{0.45\textwidth}
        \includegraphics[width=\textwidth]{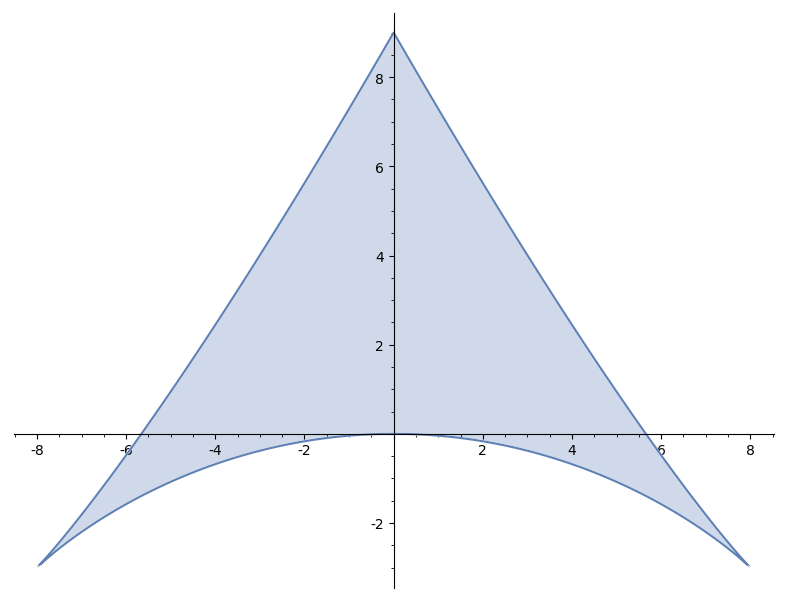}
        \caption{A hyperbolic slice. The convex hull is given by the convex hull of the three local extreme points.}
        \label{fig:hyperbolic}
    \end{subfigure}
    \hspace{0.05\textwidth}
    \begin{subfigure}[b]{0.45\textwidth}
        \includegraphics[width=\textwidth]{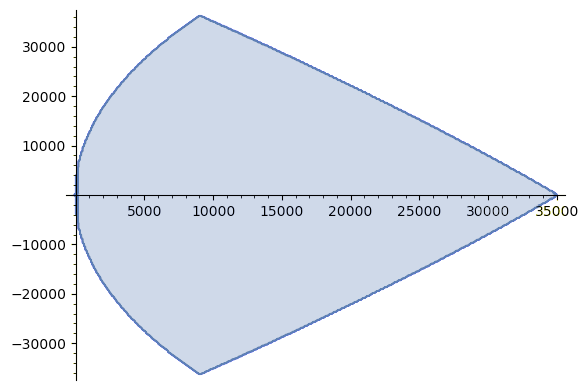}
        \caption{The stable slice $\mathcal{S} \cap L^{-1}(a)$.}
        \label{fig:slice1}
    \end{subfigure}
    \caption{Comparison of a hyperbolic slice and a stable slice.}
    \label{fig:combined}
\end{figure}
\end{example}

It often suffices to study stable slices of $\mathcal{S}_\HH(\mathbb{C})$ for a fixed half-plane $\HH$, since translations and rotations are linear isomorphisms. Let $\phi : \HH \to \G$ be a linear isomorphism between half-planes, and let $\psi = \phi^{-1}$ denote its inverse. Then $f_z \in \mathcal{S}_\HH(\C)$ if and only if $f_z \circ \psi \in \mathcal{S}_\G(\C)$.
In the following, we primarily work with $\HH = \HH_+$, the upper half-plane.

\begin{remark}
The set $\S_\HH(\mathbb{F})$ can be identified with a semialgebraic set in $\R^{2n}$. While the set of hyperbolic polynomials can be explicitly described using Sturm's Theorem (see e.g. \cite[Section~1.2]{theobald2024real}), it appears generally difficult to provide an explicit semialgebraic description of $\mathcal{S}_\HH(\mathbb{F})$. However, monic Hurwitz polynomials can be characterized as polynomials with a positive definite \emph{finite Hurwitz matrix} (see e.g. \cite[Section~9.3]{theobald2024real}). 
\end{remark}
The following remark establishes a connection between Hurwitz polynomials and stable polynomials, which we will use.
\begin{remark}\label{rem:hurwitz/stable}
The set of monic weakly Hurwitz polynomials $\HW$ can be embedded in $\S$ in the following way: If $f \in \HW$ is Hurwitz then the monic polynomial
\[
\tilde{f}(T)=(-i)^n       \cdot f(i\cdot T)  = T^n + \sum_{k=1}^ni^kz_kT^{n-k}
\]
is upper half-plane stable with coefficients alternating from the sets $\R$ or $i\cdot \R$. The map $\tilde ~ : \HW \to \S$ is linear, injective, not surjective, and its inverse is $g(T) \mapsto i^{n}g(-i\cdot T)$.
\end{remark}
\begin{example}
The polynomial \[f(T)=(T+2)(T+1+i)(T+1-i)=T^3+4T^2+6T+4\] is Hurwitz and \[\tilde{f}(T)=(-i)^{3}f(iT)=T^3-4iT^2-6T+4i\] is $\HH_+$-stable with alternating real and purely complex coefficients.   
\end{example}

\begin{definition}
An \emph{extreme point} of a convex set $C \subset  \C^n$ is a point that cannot be expressed as a convex combination of two distinct points in $C$.
For a set $A \subset \C^n$, we say that a point $z \in A$ is a \emph{local extreme point} of $A$ if there exists a neighborhood $U$ of $z$ such that $z$ is an extreme point of the convex hull $\conv(A \cap U)$.     
\end{definition}
The terminology \emph{local extreme point} is motivated by the real setting, where extreme points of a convex subset of $\R^n$ correspond to points where strict global minima or maxima of affine linear functions are attained. Similarly, local extreme points of $A \subset \R^n$ correspond to those points where strict local minima or maxima of such functions on $A$ are attained.

The following result complements the analysis of hyperbolic slices given in \cite[Theorem 4.2]{riener2012degree} and \cite[Theorem 2.8]{Riener-Schabert}, extending the discussion to the context of $\HH_+$-slices and real Hurwitz slices. As in the case of hyperbolic slices, the local extreme points in both of these settings correspond to polynomials with constrained root multiplicities.

\begin{theorem}\label{maintheorem}
Let $L : \C^n \to \C^k$ and $M : \R^n \to \R^k$ be surjective linear maps and $a \in \C^k$ and $b \in \R^k$ be points. 
\begin{enumerate}
    \item All local extreme points of an $\HH_+$-stable slice $\S \cap L^{-1}(a)$ are polynomials with at most $k$ roots in $\HH_+\setminus {\R}$ and at most $2k$ distinct real roots.
    \item All local extreme points of a real Hurwitz slice $\HW \cap M^{-1}(b)$ are polynomials with at most $k$ roots with negative real part and at most $2k$ distinct roots with real part equal to zero.
\end{enumerate}
\end{theorem}
The assumption that the linear maps are surjective is made for convenience. More generally, if $L$ (or $M$) is not surjective, the conclusion of Theorem \ref{maintheorem} still holds, with $k$ replaced by $\operatorname{rank}(L)$ (or $\operatorname{rank}(M)$). \\
Moreover, the following example shows that the converse of Theorem \ref{maintheorem} does not hold.

\begin{example}
    For $n=3$ and $k=1$, consider the surjective linear map $ L : \C^3 \to \C$ given by ${(z_1,z_2,z_3)} \mapsto {z_3}.$
    Then the point $(i,0,0) \in \S \cap L^{-1}(0)$ has one root in $\HH_+\setminus \R$ and one distinct real root. For all $\varepsilon \in (0,1)$ the points $((1-\varepsilon)i,0,0)$ and $((1+\varepsilon)i,0,0)$ also lie in $\S \cap L^{-1}(0)$. Since 
   \[(i,0,0)=\frac{((1-\varepsilon)i,0,0)+((1+\varepsilon)i,0,0)}{2}\] the point $(i,0,0)$ is not a local extreme point in the stable slice $\S \cap L^{-1}(0)$.
\end{example}
We present an example of a compact stable slice in Example \ref{ex:compact-slice}. The proofs of assertions (1) and (2) in Theorem \ref{maintheorem} follow the same line of arguments. Therefore, we isolate the key difference between them in Lemma \ref{lem:stability-perturbation}, which guarantees that small perturbations of certain \(D\)-stable polynomials remain \(D\)-stable. 

\begin{lemma}\label{lem:stability-perturbation}
\begin{itemize}
    \item [(a)] Let $\mathbb{F}\subset \C$, \(D\subset \C\) and \( p \in \mathbb{F} [T] \) be a monic polynomial whose roots lie entirely in the interior of \( D\) and \( h \in \mathbb{F}[T] \) be a polynomial with $\deg h < \deg p$. Then for all sufficiently small $\varepsilon >0$ the perturbed polynomials
    \[p_\varepsilon^{\pm} := (p \pm \varepsilon h)\]
    are \(D\)-stable, i.e., all their roots lie in \( D \).
    \item [(b)] Let \( q \in \R[T] \) be a monic hyperbolic polynomial with only distinct roots and \( h \in \R[T] \) be a polynomial with $\deg h < \deg q$. Then the perturbed polynomials
    \[q_\varepsilon^{\pm} := (q \pm \varepsilon h)\]
    are hyperbolic and therefore \(\HH_+ \)-stable for all sufficiently small $\varepsilon>0$.
    \item [(c)] Let \( q \in \R[T] \) be a monic polynomial of degree $m$ with only distinct roots and every root of $q$ has real part equal to zero. Let \( h(T)=\sum_{i=1}^mb_iT^{m-i} \in \R[T] \) be a polynomial which coefficients satisfy the linear conditions $b_{2i-1} = 0$, for $1 \leq i \leq \lfloor\frac{m}{2} \rfloor$.
    Then the perturbed polynomials
    \[q_\varepsilon^{\pm} := (q \pm \varepsilon h)\]
    have only roots with real part equal to zero for all sufficiently small $\varepsilon>0$.
\end{itemize}
\end{lemma}
\begin{proof}
    (a) and (b) follow immediately from the fact that the roots of a polynomial depend continuously on the coefficients (\cite{Roots}) and since complex roots of a real polynomial come as conjugated pairs. \\
    For (c), note that the weakly Hurwitz polynomial $q(T)$ corresponds to a monic polynomial $\tilde q (T) = (-i)^m q(i\cdot T) $ via the embedding stated in Remark \ref{rem:hurwitz/stable}. All roots of the polynomial $\tilde q$ are real, since every root of $q$ has real part equal to zero. By assumption, $q$ and thus $\tilde q$ have only pairwise distinct roots. In particular, $\tilde q$ is monic hyperbolic with only distinct roots. 
    Now, consider $h(T) = \sum_{i=1}^m b_iT^{m-i}\in \R[T]$ satisfying the condition stated in (c).
    Applying the same linear transform to $h$ gives the polynomial $\Bar{h}(T) := (-i)^m h(i\cdot T) = \sum_{k=1}^m (-1)^m b_i i^{2m-k}T^{m-k} \in \C[T]$ but the condition on the coefficients of $h$ guarantees $\Bar{h} \in \R[T]$. By (b) we have that the perturbed polynomials $r_\varepsilon^\pm := \tilde q \pm \varepsilon\Bar{h}$ are hyperbolic for all sufficiently small $\varepsilon > 0$. Then for all small $\varepsilon > 0$ the polynomials $i^mr_\varepsilon^\pm(-i \cdot T) = i^m \tilde q (-i \cdot T) \pm \varepsilon \cdot i^m \Bar{q} (-i \cdot T) = q(T)\pm \varepsilon h(T) $ have only distinct roots and every root has real part equal to zero. This was to show.
\end{proof}

Now we can prove the characterization of local extreme points in stable slices.

\begin{proof}[Proof of Theorem~\ref{maintheorem}] 
\begin{enumerate}
    \item We begin with proving assertion (1). 
Let \( z \in \S \cap L^{-1}(a) \) be a local extreme point; that is, there exists a neighborhood \( U \) of \( z \) such that \( z \) is an extreme point of \( \conv(\S \cap L^{-1}(a) \cap U) \). Define the monic polynomial
\[
f(T) := T^n - z_1 T^{n-1} + \dots + (-1)^n z_n,
\]
and factor \( f = p \cdot r \), where \( p \) has only roots in \( \HH_+ \setminus \R \) and \( r \) has only real roots.

\begin{enumerate}
\item We first show that \( \deg(p) \leq k \). Suppose for contradiction that \( \deg(p) = m > k \). Write
\[
r(T) = T^{n - m} + r_1 T^{n - m - 1} + \dots + r_{n - m}, \quad \text{with } r_0 := 1,
\]
and define the linear map
\[
\chi : \C^m \to \C^n, \quad y \mapsto \left( \sum_{i+j=1} r_i y_j, \dots, \sum_{i+j=n} r_i y_j \right),
\]
where \( 0 \leq i \leq n - m \), \( 1 \leq j \leq m \), and \( i + j \) ranges from \( 1 \) to \( n \). Since \( m > k \), we can find a nonzero vector \( b \in \ker(L \circ \chi) \).

Define the perturbation polynomial
\[
h(T) := b_1 T^{m-1} + \dots + b_m, \quad \text{and } g(T) := h(T) \cdot r(T) = c_1 T^{n-1} + \dots + c_n,
\]
where \( c = \chi(b) \in \ker L \). By Lemma~\ref{lem:stability-perturbation}~(a), the perturbed polynomials \( f \pm \varepsilon g = (p \pm \varepsilon h) \cdot r \) are stable for all \( \varepsilon > 0 \) sufficiently small. Hence, the coefficient vectors \( z \pm \varepsilon c \) lie in \( \S \cap L^{-1}(a) \). For small enough \( \varepsilon \), these vectors also lie in \( U \), so we have
\[
z = \frac{(z + \varepsilon c) + (z - \varepsilon c)}{2},
\]
contradicting the extremality of \( z \). Therefore, \( \deg(p) \leq k \).

\item Now we show that \( r \) has at most \( 2k \) distinct real roots. Suppose for contradiction that \( r \) has \( m > 2k \) distinct real roots \( x_1, \dots, x_m \). Factor
\[
f = q \cdot s, \quad \text{where } q(T) := \prod_{i=1}^m (T - x_i), \quad \deg(s) = n - m.
\]
Write
\[
s(T) = T^{n - m} + s_1 T^{n - m - 1} + \dots + s_{n - m}, \quad \text{with } s_0 := 1,
\]
and define the linear map
\[
\chi : \R^m \to \C^n, \quad y \mapsto \left( \sum_{i+j=1} s_i y_j, \dots, \sum_{i+j=n} s_i y_j \right).
\]
Since \( m > 2k \), we can again find a nonzero vector \( b \in \ker(L \circ \chi) \), and define
\[
h(T) := b_1 T^{m-1} + \dots + b_m, \quad g(T) := h(T) \cdot s(T) = c_1 T^{n-1} + \dots + c_n,
\]
with \( c = \chi(b) \in \ker L \). Since \( q \) has only simple real roots, it is hyperbolic, and by Lemma~\ref{lem:stability-perturbation}~(b), the perturbed polynomials \( f \pm \varepsilon g = (q \pm \varepsilon h) \cdot s \) are again stable for small enough \( \varepsilon > 0 \). Thus, \( z \pm \varepsilon c \in \S \cap L^{-1}(a) \cap U \), and again
\[
z = \frac{(z + \varepsilon c) + (z - \varepsilon c)}{2},
\]
contradicting extremality. Therefore, \( r \) has at most \( 2k \) distinct real roots.
\end{enumerate}
\item The proof of assertion (2) proceeds analogously to that of assertion (1), but relies on Lemma \ref{lem:stability-perturbation} (c) instead of Lemma \ref{lem:stability-perturbation} (b).
\end{enumerate}
\end{proof}

\begin{remark}[Unified Local Extremality Criterion]
The theorem above addresses only the cases of $\mathbb{H}_+$-stable and Hurwitz-stable polynomials. However, we note that the proof extends to arbitrary slices, with a small caveat: Let \( L : \C^n \to \C^k \) be a surjective linear map, and suppose that \( z \in \mathcal{S}_D(\C) \cap L^{-1}(a) \) is a local extreme point. Then, following the same reasoning, the associated polynomial \( f_z \) satisfies the following: it has at most \( k \) roots in \( \mathrm{int}(D) \setminus \R \), and at most \( 2k \) distinct real roots if \( \R \) is a subset of the boundary of \(D \).
\end{remark}

Moreover, as a corollary of Theorem \ref{maintheorem}, we obtain a result for arbitrary stable slices for closed half-planes.
\begin{corollary}\label{cor:maintheorem}
    Let $\HH$ be a closed half-plane. Every non-empty $\HH$-stable slice $\S_\HH \cap L^{-1}(a)\neq \emptyset$ contains a point that corresponds to a polynomial with at most $k+2$ roots in the interior of $\HH$ and at most $2(k+2)$ distinct roots in the boundary of $\HH$.
\end{corollary}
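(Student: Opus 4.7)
The plan is to combine Theorem \ref{maintheorem} with a compactness argument, paying for the missing compactness by enlarging $L$ with at most two additional coordinate constraints, which accounts for the shift from $k$ to $k+2$. As noted right after Definition \ref{def:univariate definitions}, any closed half-plane $\HH$ is related to $\HH_+$ by a linear bijection $\phi : \HH \to \HH_+$, whose inverse $\psi = \phi^{-1}$ induces an affine isomorphism of $\C^n$ on the level of coefficients of monic polynomials. This isomorphism carries an $\HH$-stable slice onto an $\HH_+$-stable slice cut out by a surjective linear map of the same rank, and it maps roots in $\operatorname{int} \HH$ bijectively to roots in $\HH_+ \setminus \R$ and roots in $\partial \HH$ to real roots. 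So it suffices to prove the claim for $\HH = \HH_+$.

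Pick any $z^\star \in \S \cap L^{-1}(a)$, which is nonempty by assumption. Following Remark \ref{rem:compact}, set $\widetilde L(z) := (L(z), z_1, z_2) \in \C^{k+2}$ and $b := \widetilde L(z^\star)$. Then $\S \cap \widetilde L^{-1}(b) \subseteq \S \cap L^{-1}(a)$, contains $z^\star$, and is compact by the very argument used for Lemma \ref{compact} (control of $z_1$ and $z_2$ forces both the imaginary parts and the real parts of the roots into a bounded set). Its convex hull is a compact convex set in $\C^n \cong \R^{2n}$, so by Krein-Milman it has extreme points, which lie in $\S \cap \widetilde L^{-1}(b)$ itself and hence are in particular local extreme points of this set. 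Since $\rank \widetilde L \leq k+2$, Theorem \ref{maintheorem} combined with the rank bound of Remark \ref{rmk:surjectivity} guarantees that such an extreme point corresponds to a polynomial with at most $k+2$ roots in $\HH_+ \setminus \R$ and at most $2(k+2)$ distinct real roots. Pulling back through the affine isomorphism induced by $\phi^{-1}$ yields the desired element of $\S_\HH^{2(k+2), k+2} \cap L^{-1}(a)$.

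The principal subtlety is that the unaugmented stable slice $\S \cap L^{-1}(a)$ need not be bounded, so a linear functional may fail to attain its minimum and Theorem \ref{maintheorem} cannot be invoked directly. The whole reason for the bound $k+2$ rather than $k$ in the conclusion is that Remark \ref{rem:compact} requires appending up to two further linear constraints to guarantee compactness; in favorable cases (for instance when the first two unit vectors already lie in the row span of a matrix representation of $L$) fewer constraints are needed and the bound improves accordingly. Beyond this augmentation step, the corollary reduces to Theorem \ref{maintheorem} and the reduction to the upper half-plane.
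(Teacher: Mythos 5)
Your proof is correct and takes essentially the same route as the paper: reduce to $\HH_+$ by a linear isomorphism, augment $L$ with the coordinate constraints $z_1, z_2$ to obtain a nonempty compact subslice (Lemma \ref{compact} and Remark \ref{rem:compact}), extract an extreme point, and apply Theorem \ref{maintheorem} together with the rank observation of Remark \ref{rmk:surjectivity}. The paper's proof is just a terser citation of the same three ingredients.
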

Note that the result in Corollary \ref{cor:maintheorem} does not depend on the degree $n$ of the univariate polynomials, making it particularly interesting when $n$ is large. In fact, we observe a form of stabilization in the structure of local extreme points of stable slices once the number of variables exceeds $3k$.

Before proving Corollary \ref{cor:maintheorem}, we examine the set of stable polynomials of degree $n$ with fixed leading coefficients, which constitutes a specific instance of a stable slice. We will show that these slices are compact (if $k \geq 2$) which will be instrumental in ensuring the existence of local extreme points in stable slices. 
\begin{definition}
For an integer $1 \leq k \leq n$ and a point $a=(a_1,\ldots,a_k) \in \C^k$ we define $\S (a) = \S \cap \{ z \in \C^n : z_1 = a_1, \ldots, z_k = a_k \}$ as the set of all monic $\HH_+$-stable polynomials of degree $n$ whose first $k$ non-trivial coefficients are determined by the point $a$. 
\end{definition}
With the notation introduced earlier, we have $\S(a) = \S \cap L^{-1}(a)$, where $L : ~ \C^n \to \C^k$ denotes the projection onto the first $k$ coordinates. 
The following lemma guarantees that stable slices $\S(a)$ have a local extreme point and thus contain a polynomial whose roots are distributed as described in Theorem \ref{maintheorem}.

\begin{lemma}\label{compact}
For an integer $2 \leq k \leq n$ the stable slice $\S(a)$ is compact. 
\end{lemma}
\begin{proof}
As the empty set is compact we can assume that there exists a point $z \in \S(a)$. Furthermore, we denote by $x=(x_1,\dots,x_n)\in \HH_+$ the roots of the polynomial
\[f_z := T^n - z_1 T^{n-1} + \ldots +(-1)^n z_n.\]
Then, if $e_1$ and $e_2$ denote the first and second elementary symmetric polynomial in $n$ variables, we have
\[\sum_{i=1}^n x_i = e_1(x) = a_1\]
and hence the imaginary part of the $x_i's$ is contained in $[0,\Im(a_1)]$. Furthermore, we have
\begin{align}\label{eq:p2 in e1 and e2}
\sum_{i=1}^n x_i^2 = e_1(x)^2 - 2e_2(x) = a_1^2-2a_2
\end{align}
and hence
\begin{align*}
\sum_{i=1}^n \Re(x_i)^2 = \sum_{i=1}^n \Re(x_i^2) + \Im(x_i)^2 \leq \sum_{i=1}^n \Re(x_i^2) + \Im(a_1)^2 
= \Re\left( \sum_{i=1}^n x_i^2 \right) +n\Im(a_1)^2~ .
\end{align*}
Then using the identity in (\ref{eq:p2 in e1 and e2}) we obtain 
\[\sum_{i=1}^n \Re(x_i)^2  \leq \Re(a_1^2-2a_2)+n\Im(a_1)^2 ~ .\]

This shows that also the real part of the $x_i$'s is bounded. Thus the set $\S(a)$ is bounded. Furthermore, as the roots of a polynomial depend continuously on the coefficients it is clear that $\S(a)$ is closed and therefore compact.
\end{proof}

The following example shows that the converse of Theorem~\ref{maintheorem} also does not necessarily hold for compact stable slices.
\begin{example}\label{ex:compact-slice}
Let  $L : \C^{10} \to \C^2$ be the linear projection onto the first $2$ coordinates.
The $\HH_+$-stable polynomial $f_z(T)=  (T-1-i)(T+1-i) (T-2)(T+2)(T-1)(T+1)^5$ has two roots in $\HH_+ \setminus \R$ and $4$ distinct real roots. 
Thus the point $z=(1+i,-1+i,2,-2,1,-1,\ldots,-1)$ lies in the stable slice $\S(-4+2\cdot i, -1-8\cdot i)$ which is compact by Lemma \ref{compact}. We factor $f_z(T) = g(T) \cdot h(T)$, where $h(T)=(T-2)(T+2)(T-1)$. By Lemma \ref{lem:stability-perturbation} the perturbed polynomials $h_\varepsilon^\pm (T) = h(T) \pm \varepsilon \cdot 1 $ are real rooted for small $\varepsilon > 0$.
Thus $g \cdot h_\varepsilon^\pm = f_z \pm \varepsilon g$ is $\HH_+$-stable for sufficiently small $\varepsilon > 0$ and lies in the same stable slice.
Therefore $z$ is not a local extreme point which shows that the converse of Theorem~\ref{maintheorem} fails also in the compact case. 
\end{example}

\begin{remark}\label{rem:compact}
 For a surjective linear map $L :~ \C^n \to \C^k$ and a point $a \in \C^k$ the stable slice $\S \cap L^{-1}(a)$ may be unbounded. Then we can consider the linear map $\widetilde{L} : ~ \C^n \to \C^{k+2}$, where $\widetilde{L}(z)=(L(z),z_1,z_2)$. The set $\S \cap \widetilde{L}^{-1}(b)$ is compact for any point $b \in \C^{k+2}$, by a similar argument as in the proof of Lemma \ref{compact}. Moreover, if one or both of the first two unit vectors are in the row span of a matrix representation of $L$, then we can consider $\widehat{L}(z)=(L(z),z_j)$ for $j \in \{1,2\}$ instead of $L$ or the original stable slice was already compact.
\end{remark}

\begin{proof}[Proof of Corollary \ref{cor:maintheorem}]
    Since $\HH$ can be bijectively mapped to $\HH_+$ under a linear isomorphism it suffices to show the theorem for $\HH=\HH_+$. Now the claim follows from Theorem \ref{maintheorem}, Lemma \ref{compact} and Remark \ref{rem:compact}.
\end{proof}

\begin{remark}\label{rem:case_upper_halfplane}
    In the case that $L$ (or $M$) is the projection to the first $k<n$ coordinates, we can replace $2k$ by $k$ in Theorem \ref{maintheorem}. This is, since $(0,\ldots,0,1) \in \ker (L \circ \chi)$ and we can choose $h(T):=1$ in the proof in this case. Moreover, if $k\geq 2$ the stable slice is compact by Lemma \ref{compact}. So we can say that every such non-empty stable slice contains a point corresponding to a polynomial with at most $k$ roots in the interior of $\mathbb{H}_+$ and at most $k$ distinct roots in the boundary of $\HH_+$.
\end{remark}

\section{A Grace-Walsh-Szeg\H{o} like theorem for symmetric polynomials in few multiaffine polynomials} \label{Sec:Grace-Walsh-Szego}
Throughout this section, let $\HH$ be a closed half-plane, let $\X := (X_1,\ldots,X_n)$ be a tuple of $n$ variables and let $\C[\underline{X}]$ and $\R[\underline{X}]$ denote the complex and real polynomial rings in $\underline{X}$. 

The main results of this section are a statement akin to the well-known Grace–Walsh–Szegő coincidence theorem (Theorem \ref{thm:grace}) and a generalization of the degree principle (Theorem \ref{degree-principle}), which we derive from Proposition \ref{prop:main2}. This proposition, in turn, follows from the characterization of local extreme points of stable slices via root multiplicities given in Theorem \ref{maintheorem}. We present the proofs in Subsection \ref{subsec:proofs} and in Subsection \ref{subsec:converse} we briefly discuss an alternative way of generalization of Grace-Walsh-Szeg\H{o}.  \\

Recall that a multivariate polynomial is called \textit{multiaffine}, if it is linear in every variable.

\begin{theorem}[Grace-Walsh-Szeg\H{o} coincidence theorem, \cite{grace1902zeros}] \label{grace-walsh-szego}
Let $D$ be a circular region and let $f \in \C[\X]$ be a multiaffine symmetric polynomial. If $\deg (f) = n$ or if $D$ is convex, then for any $(x_1,\ldots,x_n) \in D^n$ there exists a $y \in D$ with $f(x_1,\ldots,x_n)=f(y,\ldots,y)$.
\end{theorem}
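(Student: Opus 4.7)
The plan is to reduce the statement to the classical apolarity theorem of Grace. Since $f$ is multiaffine and symmetric, I would first decompose it in the basis of elementary symmetric polynomials,
\[
f(X_1, \ldots, X_n) \;=\; \sum_{k=0}^n a_k\, e_k(X_1, \ldots, X_n),
\]
for some constants $a_0, \ldots, a_n \in \C$, with $a_n \neq 0$ precisely when $\deg f = n$. Setting $c := f(x_1, \ldots, x_n)$ and introducing the diagonal polynomial
\[
\hat f(T) \;:=\; f(T, \ldots, T) \;=\; \sum_{k=0}^n a_k \binom{n}{k} T^k,
\]
the theorem becomes equivalent to producing a root of the univariate polynomial $Q(T) := \hat f(T) - c$ inside $\A$.

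Next I would pair $Q$ with $P(T) := \prod_{i=1}^n (T - x_i) = \sum_{k=0}^n (-1)^k e_k(x)\, T^{n-k}$, whose roots all lie in $\A$ by hypothesis, and verify the apolarity identity between $P$ and $Q$ regarded as polynomials of formal degree $n$. Expanding the bilinear apolarity form, the binomial weights $\binom{n}{k}$ appearing in $\hat f$ cancel against the binomial weights in the pairing, and the sum collapses to $f(x_1, \ldots, x_n) - c$, which vanishes by the choice of $c$. Hence $P$ and $Q$ are apolar, and Grace's apolarity theorem then asserts that every closed circular region containing all roots of $P$ must contain at least one root of $Q$. Taking this region to be $\A$ produces the desired $y \in \A$ with $f(y, \ldots, y) = \hat f(y) = c = f(x_1, \ldots, x_n)$.

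The two alternative hypotheses enter precisely at the point where Grace's theorem is applied. When $\deg f = n$, both $P$ and $Q$ are genuinely of degree $n$, and the theorem applies directly. When $\deg f < n$ (so $a_n = 0$) but $\A$ is convex, the polynomial $Q$ is degree-deficient and formally admits a "root at infinity"; here I would perturb the top coefficient by replacing $a_n = 0$ with $a_n = \varepsilon$, apply the full-degree case to obtain a root $y_\varepsilon \in \A$, and then pass to the limit $\varepsilon \to 0$, using compactness on the Riemann sphere together with the convexity of $\A$ to prevent the limit point from escaping to infinity outside $\A$.

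The main obstacle I anticipate is the careful bookkeeping in the apolarity identity, aligning signs and binomial normalizations so that the pairing indeed reduces to the single relation $f(x) = c$; any misplaced factor of $\binom{n}{k}$ would derail the reduction. The secondary obstacle is the limiting argument under the convexity hypothesis, which is precisely the reason the classical statement splits into the two alternatives and which the degree-$n$ assumption is designed to bypass altogether.
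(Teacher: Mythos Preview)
The paper does not supply a proof of this theorem; it is quoted as the classical Grace--Walsh--Szeg\H{o} coincidence theorem with a pointer to Rahman--Schmeisser, so there is no in-paper argument to compare against.

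Your outline via Grace's apolarity theorem is the standard route and is essentially correct. The apolarity bookkeeping does collapse as you hope: writing $f=\sum_k\alpha_k e_k$, $P(T)=\prod_i(T-x_i)=\sum_j\binom{n}{j}p_jT^{n-j}$ with $p_j=(-1)^je_j(x)/\binom{n}{j}$, and $Q(T)=\hat f(T)-c=\sum_j\binom{n}{j}q_jT^{n-j}$ with $q_{n-k}=\alpha_k$ for $k\ge1$ and $q_n=\alpha_0-c$, the apolar form $\sum_j(-1)^j\binom{n}{j}p_jq_{n-j}$ reduces to $\sum_j e_j(x)\,q_{n-j}=f(x)-c=0$. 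So the reduction to Grace is clean.

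The one place needing more care than your sketch gives it is the convex, degree-deficient case. Your perturbation-and-limit argument is fine when $\A$ is a closed disk, since boundedness prevents $y_\varepsilon$ from escaping to infinity. For a closed half-plane, however, the point at infinity lies on the boundary circle on the Riemann sphere, and ``compactness plus convexity'' does not by itself rule out $y_\varepsilon\to\infty$. The standard fix avoids the limit entirely: any finite tuple $(x_1,\dots,x_n)$ in a closed half-plane already lies in some closed disk $D\subset\A$, so one applies the disk case to obtain $y\in D\subset\A$. With that adjustment the argument is complete.
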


Recall that every symmetric polynomial in \( n \) variables can be uniquely expressed as a polynomial in the first \( n \) elementary symmetric polynomials. This is known as the \emph{fundamental theorem of symmetric functions} (see, e.g., \cite[p.~20]{macdonald1998symmetric}). 

In the sequel, we introduce new variables \( \underline{Z} := (Z_1, \ldots, Z_n) \) and note that for any symmetric polynomial \( f \in \mathbb{F}[\X] \), where \( \mathbb{F} \in \{\mathbb{R}, \mathbb{C}\} \), there exists a unique polynomial \( g \in \mathbb{F}[\underline{Z}] \) such that
\[
f(\X) = g(e_1(\X), \ldots, e_n(\X)).
\]

In particular, we are interested in symmetric polynomials that depend only on a small number of linear combinations of elementary symmetric polynomials. Multiaffine symmetric polynomials fall into this category, as they can be described as one affine linear combination of \( e_1, \ldots, e_n \). However, polynomials such as \( (2e_1 + 3e_4 + i)^d \) illustrate that the degree can still be arbitrarily large, even when the dependence is restricted to few combinations.

We now introduce the following notation to describe points with controlled numbers of distinct coordinates.

\begin{definition} \label{def:univariate definitions}
Let \( \HH \) be a closed half-plane. 
We define \( \HH_{k,m} \subset \HH^n \) to be the set of points with at most \( k \) distinct coordinates on the boundary of \( \HH \) and at most \( m \) coordinates in the interior:
\[
\HH_{k,m}= \left\{ x\in \HH^n ~:~  |\{x_1,\dots,x_n\}\cap \operatorname{bd} \HH|\leq k \text{ and } |\{j \in \{1,\dots,n\} : x_j \in \operatorname{int} \HH \}| \leq m \right\}.
\]
\end{definition}

The classical Grace–Walsh–Szegő theorem guarantees that symmetric multiaffine polynomials attain their values at diagonal points. In this work, we extend this perspective to symmetric polynomials depending on few multiaffine building blocks. Using the structure of stable slices and local extremality developed in Section 1, we show that a similar, though weaker, coincidence property holds: function values can be matched at points with few distinct coordinates, explicitly controlled by the algebraic structure of the polynomial.

\begin{theorem}\label{thm:grace}
    Let $f\in \C[\X]$ be a symmetric polynomial that can be written as a polynomial in $k$ symmetric and multiaffine polynomials. Furthermore, let $x\in \HH^n$. Then there exists a point $\tilde x \in \HH_{2(k+2),k+2}$ with
    $ f(x)=f(\tilde x). $
\end{theorem}

Note that, in contrast to the Grace–Walsh–Szeg\H{o} coincidence theorem, our result does not require the polynomial $f$ to be multiaffine. However, it is also less general in two respects: first, we restrict our attention to half-planes rather than arbitrary circular regions; second, the conclusion (in the multiaffine case) we obtain is weaker.
More precisely, if $f$ is symmetric and multiaffine of degree $d\geq 2$ and $x\in D^n$, then our result guarantees the existence of a point $\tilde x \in \HH_{6,3}$ such that
    $ f(x)=f(\tilde x), $
    whereas the Grace-Walsh-Szeg\H{o} theorem ensures the existence of a $y \in \HH$ such that $ f(x) = f(y,\ldots,y) .$ 

Moreover, we address the problem of giving a \emph{degree principle} in analogy to \emph{Timofte's degree principle} for real varieties.
\begin{theorem}[Degree principle, \cite{timofte2003positivity}]
Let $f_1,\ldots,f_m \in \R[\underline{X}]$ be symmetric polynomials. Then their real variety is non-empty if and only if it contains a real point with at most $\max_{1 \leq i \leq m}\{\deg (f_i),2\}$ many distinct coordinates.
\end{theorem}

\begin{definition}\label{def:stable}
Let \( f_1, \ldots, f_m \in \mathbb{C}[\underline{X}] \) be polynomials, and denote by \( V(f_1, \ldots, f_m) \subset \mathbb{C}^n \) their complex zero set. 
Let \( \HH \subset \mathbb{C} \) be a closed half-plane. 
We say that the variety \( V(f_1, \ldots, f_m) \) is \emph{disjoint from \( \HH^n \)} if
\[
V(f_1, \ldots, f_m) \cap \HH^n = \emptyset.
\]
Similarly, we say that a polynomial \( f \in \mathbb{C}[\underline{X}] \) is \emph{\( \HH \)-disjoint} if its zero set \( V(f) \) is disjoint from \( \HH^n \).
\end{definition}

\begin{remark}
In Definition \ref{def:stable}, we use the notion of \(\HH\)-disjointness for multivariate polynomials, meaning that the zero set does not intersect \(\HH^n\). 
In the literature a multivariate polynomial which is $\HH$-disjoint is usually called stable.
This contrasts with the notion of stability for univariate polynomials in Definition \ref{def:slices}, where a univariate polynomial is called \(\HH\)-stable if all of its roots lie inside \(\HH\).
Since the complement \(\HH^c\) of \(\HH\) in \(\mathbb{C}\) is an open half-plane, it follows that for univariate polynomials, \(\HH\)-disjointness corresponds to \(\HH^c\)-stability.
\end{remark}

\begin{theorem}[Double-degree principle]\label{degree-principle}
    Let $f_1,\dots, f_m \in \C[\X]$ be symmetric polynomials of degree at most $d$. Then 
    \[V(f_1,\dots, f_m)\cap \HH^n = \emptyset \iff V(f_1,\dots,f_m)\cap \HH_{2(d+2),d+2} = \emptyset. \]
    Moreover, if $\HH$ is a rotation of the upper half-plane we can replace $\HH_{2(d+2),d+2}$ by $\HH_{d,d}$.
\end{theorem}
Although one might hope for a stronger degree principle, the next example shows that disjointness of a variety defined by symmetric polynomials of degree $\leq d$ cannot always be checked by testing points with at most $d$ many distinct coordinates.

\begin{example}
Let $n=4$ and consider $f_1:=e_1-23 i$, $f_2:=e_2-463 i$ and $f_3:=e_3-8461 i$. Then
\[V(f_1,f_2,f_3)\cap \HH_+^4 \neq \emptyset ~ \text{and} ~ V(f_1,f_2,f_3)\cap \{ x\in \HH_+^4 ~:~ |\{x_1,\dots,x_4\}|\leq 3 \} = \emptyset,\]
which can either be computed directly using a Gröbner basis or concluded by using Example \ref{ex:stable-slice}.
\end{example}

\begin{remark}
    The results also extend to open half-planes as follows: let \( \G \subset \mathbb{C} \) be an open circular region, and let \( x \in \G^n \). Then \( x \in \HH^n \) for some closed half-plane \( \HH \subset \G \). 
    Consequently, in Proposition \ref{prop:main2} and Theorem \ref{thm:grace}, the set \( \G_{2(k+2),k+2} \) can be replaced by \( \G_{0,3(k+2)} \), and in Theorem \ref{degree-principle}, \( \G_{2(d+2),d+2} \) can be replaced by \( \G_{0,3(d+2)} \).
\end{remark}

If $\HH=\HH_+$ is the upper half-plane, one can also formulate a generalization of the half-degree principle for the upper half-plane.

\begin{theorem}[Half-degree principle for the upper half-plane]\label{thm:half-degree}
Let $f\in \C[\X]$ be a symmetric polynomial of degree $d\leq n$ and $\lambda,\mu \in \R$. Then
\[\inf_{x\in \HH_+^n} \lambda \Re(f(x)) + \mu \Im(f(x)) =\inf_{x\in {\HH_+}_{k,k}} \lambda \Re(f(x)) + \mu \Im(f(x)),\]
where $k=\max\{\lfloor \frac{d}{2} \rfloor,2\}$.
\end{theorem}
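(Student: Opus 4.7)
The plan is to combine the structural observation of Proposition \ref{rem:degree} with our main result on local extreme points of stable slices (Theorem \ref{maintheorem}) in the improved form given by Remark \ref{rem:case_upper_halfplane}. Write $F(x) := \lambda \Re(f(x)) + \mu \Im(f(x))$ and let $M \in \R \cup \{-\infty\}$ denote its infimum over $\HH_+^n$.

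First, I would use Proposition \ref{rem:degree} to write $f = g(e_1, \ldots, e_d)$ with $g$ complex-linear in the variables $Z_{\lfloor d/2 \rfloor + 1}, \ldots, Z_d$. Set $k := \max\{\lfloor d/2 \rfloor, 2\}$ and let $L : \C^n \to \C^k$ be the projection onto the first $k$ coordinates. The key observation is that since $k \geq \lfloor d/2 \rfloor$, once the values of $e_1, \ldots, e_k$ are fixed the expression $g(e_1, \ldots, e_d)$ becomes complex-affine in the remaining $e_{k+1}, \ldots, e_d$, and does not involve $e_{d+1}, \ldots, e_n$ at all. Consequently, $F$ pulls back via the Vieta map to a function that, on any stable slice $\S \cap L^{-1}(a)$, is the restriction of a globally real-affine function on $\C^n$.

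Next, for any $v > M$, pick $x^* \in \HH_+^n$ with $F(x^*) < v$ and set $a^* := L(e_1(x^*), \ldots, e_n(x^*))$. Because $k \geq 2$, Lemma \ref{compact} shows that $\S \cap L^{-1}(a^*)$ is compact and non-empty. The real-affine lift of $F$ therefore attains its minimum on this slice; by applying Minkowski's theorem to the closed convex hull of the minimum set one may even choose a minimizer $\tilde z$ that is an extreme point of the closed convex hull of the whole slice, which is in particular a local extreme point in the sense of the paper (take for $U$ any open ball containing the slice). Theorem \ref{maintheorem} combined with Remark \ref{rem:case_upper_halfplane} then guarantees that $\tilde z$ corresponds to a polynomial with at most $k$ distinct real roots and at most $k$ non-real roots in $\HH_+$, so a root tuple $\tilde x$ of this polynomial lies in ${\HH_+}_{k,k}$.

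Since $(e_1(x^*), \ldots, e_n(x^*)) \in \S \cap L^{-1}(a^*)$, minimality of $\tilde z$ on the slice yields $F(\tilde x) \leq F(x^*) < v$, and letting $v \downarrow M$ proves the equality of the two infima. The main obstacle I foresee is keeping the case $d \leq 3$ clean: here $k=2$ is forced by the compactness requirement of Lemma \ref{compact} rather than by the linearity shape of $g$. Since $k = \max\{\lfloor d/2 \rfloor, 2\} \geq \lfloor d/2 \rfloor$ always, the linearity reduction still applies; when $d \leq 2$ the lifted functional is in fact constant on the slice and the existence of a point in ${\HH_+}_{k,k}$ on the slice is supplied directly by Corollary \ref{cor:maintheorem}.
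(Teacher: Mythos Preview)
Your argument is correct and follows essentially the same route as the paper: represent $f$ via Proposition~\ref{rem:degree}, pass to the compact slice $\S(a)$ determined by the first $k$ elementary symmetrics (Lemma~\ref{compact}), note that the functional is real-affine there, and locate an extreme-point minimizer in $\S^{k,k}$ via Theorem~\ref{maintheorem} in the sharpened form of Remark~\ref{rem:case_upper_halfplane}. One minor slip: in your closing remark on the case $d\le 2$, Corollary~\ref{cor:maintheorem} only produces a point in $\S^{2(k+2),k+2}$, not in $\S^{k,k}$; but this is harmless, since a constant functional is in particular affine and your main extreme-point argument already subsumes that case without any separate appeal.
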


\subsection{Proofs of Theorems \ref{thm:grace}, \ref{degree-principle} and \ref{thm:half-degree}}  \label{subsec:proofs}
The proofs of Theorems \ref{thm:grace} and \ref{degree-principle} are based on our result concerning the existence of polynomials with few distinct roots in stable slices (see Theorem \ref{maintheorem}).

\begin{definition}\label{def:sufficient}
Let $f \in \C[\X]$ be a symmetric polynomial and write $f(\underline{X})=g(e_1(\underline{X}),\ldots,e_n(\underline{X}))$ in terms of elementary symmetric polynomials for a unique polynomial $g \in \C[\underline{Z}]$. 
\begin{enumerate}
    \item We say that $f$ is \emph{$(\ell_1,\dots,\ell_k)$-sufficient} if $g\in\C[\ell_1,\dots,\ell_k]$ where $\ell_1(\underline{Z}),\dots,\ell_k(\underline{Z})$ are linear forms.
    \item Moreover, we say that an algebraic  variety $V\subset \C^n$, which is closed under the permutation action of $S_n$, is \emph{$(\ell_1,\dots,\ell_k)$-sufficient}, if $V$ can be described as vanishing set of $(\ell_1,\dots,\ell_k)$-sufficient polynomials $f_1,\ldots,f_m$.
\end{enumerate} 
\end{definition}

\begin{remark}\label{rem:multiaffine_sufficient}
A polynomial \( f \) is called \emph{\((\ell_1,\dots,\ell_k)\)-sufficient} for some linear forms \( \ell_1,\dots,\ell_k \) if and only if \( f \) can be expressed as a polynomial in \( k \) symmetric and multiaffine polynomials. 

In particular, every symmetric and multiaffine polynomial is \(\ell_1\)-sufficient for some linear form \(\ell_1\), and any symmetric polynomial \( f \in \mathbb{C}[\underline{X}] \) is \((Z_1,\ldots,Z_n)\)-sufficient.

For instance, for \( n \geq 3 \), the polynomial \( e_1^2(\underline{X}) + e_2(\underline{X}) + 2e_3(\underline{X}) \) is \((\ell_1, \ell_2)\)-sufficient, where \( \ell_1(\underline{Z}) = Z_1 \) and \( \ell_2(\underline{Z}) = Z_2 + 2Z_3 \). 
For further details on the notion of sufficiency and methods for checking sufficiency, we refer to \cite[Subsection 3.3]{Riener-Schabert}.
\end{remark}

The following lemma is an immediate consequence of the fundamental theorem of symmetric functions and may serve as a motivation for Definition \ref{def:sufficient}. 

\begin{lemma}\label{rem:degree}
Let $f \in \R[\X]$ be a symmetric polynomial of degree $d \leq n$. Then $f$ is $\left( Z_1,\dots,Z_{d} \right)$-sufficient, i.e., $f$ can be written as $f=g(e_1,\dots,e_d)$ for some $g\in \C[Z_1,\dots,Z_d]$. Moreover, $g$ is linear in $Z_{\lfloor\frac{d}{2}\rfloor+1},\dots,Z_d$.
\end{lemma}

Theorems \ref{thm:grace} and \ref{degree-principle} will follow as consequences of the following key result.

\begin{proposition}\label{prop:main2}
    Let \( V \subset \mathbb{C}^n \) be an algebraic set defined by \((\ell_1,\dots,\ell_k)\)-sufficient symmetric polynomials. 
    Then \( V \) is \(\HH\)-disjoint if and only if 
    \[
    V \cap \HH_{2(k+2),k+2} = \emptyset.
    \]
\end{proposition}

\begin{proof}
    The forward implication is immediate from the definitions. 
    
    For the converse, assume that \( V \) is not \(\HH\)-disjoint. Then there exists a point \( x \in V \cap \HH^n \). 
    Consider the vector \( z := (e_1(x), \dots, e_n(x)) \in \mathbb{C}^n \), which lies in the set \( \mathcal{S}_{\HH}(\mathbb{C}) \cap L^{-1}(a) \), where
    \[
    L : \mathbb{C}^n \to \mathbb{C}^k, \quad y \mapsto (\ell_1(y), \dots, \ell_k(y)),
    \quad \text{and} \quad a := L(z) \in \mathbb{C}^k.
    \]
    By Corollary \ref{cor:maintheorem}, there exists a point \( \tilde{z} \in \mathcal{S}_{\HH}(\mathbb{C}) \cap L^{-1}(a) \) such that the associated polynomial \( f_{\tilde{z}} \) has roots \( \tilde{x} \in \HH_{2(k+2),k+2} \).
    In particular, \( \tilde{z} = (e_1(\tilde{x}), \dots, e_n(\tilde{x})) \) and \( L(\tilde{z}) = a = L(z) \).

    Since \( V \) is \((\ell_1,\dots,\ell_k)\)-sufficient, and \( \tilde{z} \) satisfies the same defining relations as \( z \), it follows that \( \tilde{x} \in V \).
    Thus \( V \cap \HH_{2(k+2),k+2} \neq \emptyset \), completing the proof.
\end{proof}

\begin{proof}[Proof of Theorem \ref{thm:grace}]
The statement follows directly from Remark \ref{rem:multiaffine_sufficient} and Proposition \ref{prop:main2}.
\end{proof}

\begin{proof}[Proof of Theorem \ref{degree-principle}]
The first part of the theorem follows immediately from Proposition \ref{prop:main2} and Lemma \ref{rem:degree}. \\
In the case that $\HH$ is a rotation of the upper half-plane we can replace $\HH_{2(d+2),d+2}$ by $\HH_{d,d}$. This follows from Remark \ref{rem:case_upper_halfplane} for $d\geq 2$ and the case $d=1$ is trivial. 
\end{proof}

\begin{proof}[Proof of Theorem \ref{thm:half-degree}]
    Write $f=g(e_1,\dots,e_d)$ for some $g\in\C[\underline{Z}]$ and observe that $g$ is linear in $Z_{\lfloor\frac{d}{2}\rfloor+1},\dots,Z_d$ by Lemma \ref{rem:degree}.
    Let now $x\in \HH_+^n$ and consider $z:=(e_1(x),\dots,e_n(x))\in \S(a)$, where $a:=(e_1(x),\dots,e_k(x))$. Since $\S(a)$ is compact and $g$ is linear on $\S(a)$, the minimum of $\lambda \Re (g) + \mu \Im (g)$ on $\S(a)$ is taken on an extreme point of the convex hull of $\S(a)$, i.e., on a point $\tilde z\in \S(a)$ which corresponding polynomial $f_{\tilde{z}}$ has roots $x \in {\HH_+}_{k,k}$ by Remark \ref{rem:case_upper_halfplane}.
\end{proof}

\subsection{A converse to Grace-Walsh-Szeg\H{o}'s coincidence theorem} \label{subsec:converse}

In a contrasting direction of generalization, Brändén and Wagner \cite{Bränden-Wagner} showed that for the open upper half-plane $\operatorname{int} \HH_+$ and for any proper subgroup $G \subsetneq S_n$ acting on $\C[\underline{X}]$ by variable permutation, there is no analogue of the Grace-Walsh-Szeg\H{o} coincidence theorem.
\begin{theorem}\cite[Theorem~2]{Bränden-Wagner}
Let $G \subset S_n$ be a permutation group. 
Suppose that for any multiaffine
$G$-invariant polynomial $f \in \C[\underline{X}]$ and any  $x \in \operatorname{int} \HH_+^n$ there is a $y \in \operatorname{int} \HH_+$ with $f(y,\ldots,y)=f(x)$, then $G$ must be already the full symmetric group $S_n$.
\end{theorem}

By considering Young subgroups of $S_n$ we find that a weaker statement still holds.
Recall that a \emph{Young subgroup} of $S_n$ is a group which is isomorphic to $S_{\lambda_1} \times \cdots \times S_{\lambda_l}$ for a partition $(\lambda_1,\ldots,\lambda_l)$ of $n$.

\begin{definition}\label{def:Youngsubgroup}
    For a group $G \subset S_n$ we write $S(G)=\tilde{S}_{i_1}^1\times \dots \times \tilde{S}_{i_{\kappa(G)}}^{\kappa(G)} \subset S_n$ for
    a Young subgroup of $G$, where $\tilde{S}_{i_j}^j$ is the symmetric group on $i_j$ elements acting on $\C^n$ by permuting the $i_1+\dots+i_{j-1}+1$ to $i_1+\dots+i_{j}$-th coordinates and $ \kappa(G)$ be the minimal number of factors needed to define such a Young subgroup of $S_n$.
\end{definition}
For instance, for $G = \langle (i,i+1) : 1 \leq i \leq n-2 \rangle$ we have $\kappa(G)=2$ and $S(G) = \tilde{S}_{n-1}^1 \times \tilde{S}_1^2$ is a Young subgroup defined in Definition \ref{def:Youngsubgroup}. 
Observe that $\sum_{j=1}^{\kappa(G)} i_j=n$ holds for all groups $G\subset S_n$.
\begin{proposition}
    Let $\HH$ be a half-plane, $f\in \C[\X]^G$ be a $G$-invariant multiaffine polynomial and $x\in \HH^n$. Then there are $y_1,\dots,y_{\kappa(G)} \in \HH$, such that
    \[f(x)=f(\underbrace{y_1,\dots,y_1}_{i_1 \text{-times}},\dots,\underbrace{y_{\kappa(G)},\dots,y_{\kappa(G)}}_{i_{\kappa(G)} \text{-times}} ).\]
    
\end{proposition}
\begin{proof}
Let $S(G)=\tilde{S}_{i_1}^1\times \dots \times \tilde{S}_{i_{\kappa(G)}}^{\kappa(G)}\subset S_n$ be as in Definition \ref{def:Youngsubgroup} and $x=(x_1,\ldots,x_n)\in \HH^n$. The polynomial
\[f_1:=f(X_1,\dots,X_{i_1},x_{i_1+1},\dots,x_n)\subset \C[X_1,\dots,X_{i_1}]\]
is $\tilde{S}_{i_1}^1$-invariant and multiaffine, so by Grace-Walsh-Szeg\H{o}'s coincidence theorem, there is $y_1\in \HH$, such that
\[f(x)=f_1(x_1,\dots,x_{i_1})=f_1(\underbrace{y_1,\dots,y_1}_{i_1 \text{-times}}).\]
Recursively, we define the $\tilde{S}_{i_j}^j$-invariant polynomial \[f_j=f(\underbrace{y_1,\dots,y_1}_{i_1 \text{-times}},\dots,\underbrace{y_{j-1},\dots,y_{j-1}}_{i_{j-1} \text{-times}},X_1,\dots,X_{i_j},x_{i_1+\dots+i_{j}+1},\dots,x_{n})\]
and, by Grace-Walsh-Szeg\H{o}'s theorem, there is a $y_j\in D$, such that
\[f(x)=f_j(x_{i_1+\dots+i_{j-1}+1},\dots, x_{i_1+\dots+i_{j}})=f_j(\underbrace{y_j,\dots,y_j}_{i_j \text{-times}}).\]
\end{proof}
Using the result of Bränd\'en and Wanger we can formulate the following converse statement:

\begin{proposition}
Let $G\subset S_n$ and $H=\tilde{S}_{j_1}^1\times \dots \times \tilde{S}_{j_m}^m \subset S_n$
be a supergroup of $G$.
If for any $G$-invariant multiaffine polynomial $f\in \C[\X]^G$ and any $x\in (\operatorname{int} \HH_+)^n$, there are $y_1,\dots,y_m\in \operatorname{int} \HH_+$, such that
    \[f(x)=f(\underbrace{y_1,\dots,y_1}_{j_1 \text{-times}},\dots,\underbrace{y_m,\dots,y_m}_{j_m \text{-times}} ),\]
then every $G$-invariant multiaffine polynomial is already $H$-invariant.
\end{proposition}

\section{Conclusion and open questions} \label{sec:final}
In this work, we restricted our attention to half-plane stable polynomials. However, the notion of stability can be formulated for any domain or more typically for any circular region.
 It is well known that Möbius transformations map circular regions to circular regions, and that testing stability for an arbitrary circular region can be reduced to testing $\HH_+$-stability for an associated polynomial, possibly of smaller degree.
 Specifically, let $D$ be a circular region and let $\phi (z) = \frac{az+b}{cz+d}$ be a Möbius transformation mapping $\HH_+$ to $D$. Then a monic polynomial $f \in \C[T]$ is $D$-stable if and only if the polynomial $(cT+d)^{\deg (f)}f\left(\frac{aT+b}{cT+d}\right)$ is $\HH_+$-stable. The roots of the associated polynomial are contained in the image of the roots of $f$ under $\phi^{-1}$. However, the transformed polynomial may fail to be monic or may have lower degree. This phenomenon occurs when a root of $f$ is mapped to the pole of $\phi^{-1}$. For instance, consider $f=p\cdot (T-1)$ which is $\{ x \in \C : |x|\leq 1\}$-stable and $p$ has not root at $1$, then 
\[(T+i)^{\deg{(p)}}p\left(\frac{T-i}{T+i}\right)(T+i)\left(\frac{T-i}{T+i}-1\right)=(T+i)^{\deg{(p)}}p\left(\frac{T-i}{T+i}\right)\cdot (-2i)\]
is a non-monic $\HH_+$-stable polynomial of degree $\deg(f)-1$. Thus our proof of Theorem \ref{maintheorem} does not transfer to circular regions which are bounded by a circle. Nevertheless, the following questions seem worth to be asked.
\begin{question}
    Can Theorem \ref{maintheorem} be adapted to arbitrary circular regions? If not, can our variation of the coincidence theorem be extended to a closed domain bounded by a circle?
\end{question}

\begin{question}
   Can the double-degree principle from Theorem~\ref{degree-principle} be further improved?
\end{question}

The Grace-Walsh-Szeg\H{o} coincidence theorem plays a central role in understanding the stability of multivariate polynomials. One important consequence is that a polynomial is stable if and only if its polarization, which is multiaffine and symmetric in each group of variables, is stable (see, e.g., \cite[Section~9.4]{theobald2024real}). The polarization of a polynomial $f \in \C[\underline{X}]$ introduces new groups of variables (one for each variable of $f$).

In this work, we have established a result akin to the Grace-Walsh-Szeg\H{o} theorem that applies to certain non-multiaffine symmetric polynomials. This raises the natural question:

\begin{question} Can Theorem~\ref{thm:grace} be used to characterize or construct classes of linear operators that preserve stability for (non-multiaffine symmetric) polynomials? \end{question}

Finally, it might be interesting to study possible stratifications of the set of weakly Hurwitz polynomials with respect to root multiplicities, similar to the stratifications investigated for hyperbolic polynomials by Arnold \cite{Arnold}, Kostov \cite{Kostov}, Meguerditchian \cite{Meguerditchian}, and others. Recently, Lien \cite{lien2023hyperbolic} showed that for the special case of fixed first $k$ coefficients in the hyperbolic setup, one can reconstruct the stratification's compositions from those of its $0$-dimensional strata, and Schabert and Lien \cite{Schabert-Lien} demonstrated that in this case the resulting poset has a structure similar to that of a polytope, leading to the same bounds on the number of $j$-dimensional strata. We ask whether similar results hold for Hurwitz slices defined by fixing the first $k$ coefficients. We note, however, that any stratification for weakly Hurwitz polynomials must necessarily be more refined than in the hyperbolic case: one has to distinguish between roots lying in the interior of the left half-plane, roots at the origin, and roots located on the remaining part of the boundary of the left half-plane. This suggests that the root multiplicity data should be described by triples $(s, r, \mu)$, where $s$ denotes the number of roots in the interior, $r$ denotes the multiplicity of the root at zero, and $\mu$ encodes the multiplicities of the roots on the rest of the boundary.

\subsubsection*{Acknowledgements} The authors would like to thank Thorsten Theobald for helpful insights on the semialgebraic structure of Hurwitz polynomials. We are also grateful to two anonymous referees for their constructive feedback, which helped to improve the presentation of the manuscript. 

\bibliographystyle{abbrv}
\bibliography{references}

\end{document}